\newtheorem{theorem}{Theorem}
\theoremstyle{plain}
\newtheorem{definition}{Definition}
\numberwithin{equation}{section}
\begin{document}
\title[SDCPN]{ Restricted SDC Edge Cover Pebbling Number}
\author{A. Lourdusamy$^{1}$, F. Joy Beaula$^{2}$ F. Patrick$^{3}$ and I. Dhivviyanandam$^{4}$}
\address{$^{2}$Reg. No:20211282092004, $^{1,2,3}$Department of Mathematics\newline%
\indent St. Xavier's College (Autonomous), Palayamkottai - 627 002,\newline%
\indent Affiliated to Manonmaniam Sundaranar University, Abhisekapatti, Tirunelveli-627 012,\newline%
\indent Tamilnadu, India.}
\address{$^{4}$ Department of Mathematics\newline%
	\indent North Bengal St. Xavier's College, Rajganj- 735134\newline%
	\indent Affiliated to North Bengal University, Jailpaiguri, West Bengal,\newline%
	\indent  India.}
\email{$^{1}$lourdusamy15@gmail.com, $^{2}$joybeaula@gmail.com, $^{3}$patrick881990@gmail.com $^{4}$divyanasj@gmail.com}
\date{\today}
\keywords{Edge Pebbling number, cover Edge pebbling number, SDC Labeling.}%

\begin{abstract}
 The restricted edge pebbling distribution is  a distribution of pebbles on the edges of $G$ is placement of pebbles on the edges with the restriction that only even number of pebbles should be  placed on the edges having labels $0$.  Given an SDC labeling of $G$, the restricted SDC edge cover pebbling number of a graph $G$, $\psi_{EC}(G)$, is the least positive integer $m$ for which  any restricted edge pebbling distribution  of $m$ pebbles such that at the end of there are no pebbles on the edges having label $0$ will allow the shifting of a pebble simultaneous to all edges with label $1$ using a sequence of restricted edge pebbling moves. We compute the restricted SDC edge cover pebbling number for some graphs.
\end{abstract}
\maketitle
\textbf{2010 Mathematics Subject Classification:} 05C99, 05C78
\section{\bf Introduction}
A graph labeling is an assignment of integers to the vertices or
edges or both, which will satisfy certain conditions.  Lourdusamy et al. \cite{LP1} introduced  Sum divisor Cordial Labeling. Let $G$ be a graph and $g:V(G) \rightarrow \{1, 2, \cdots |V(G)|\}$ be a bijection. For each edge $ab$, assign the label $1$ if $2$ divides $(g(a)+g(b))$ and the label $0$ otherwise.  The function $g$ is called SDC labeling if $|e_{g}(0)-e_{g}(1)| \leq 1$ where $e_{g}(x), x=0,1$ is the number of edges labelled with $x$ . A graph which admits SDC labeling is called a SDC graph.

A vertex distribution of pebbles is a function form $V(G) \rightarrow N \cup \{0\}$.
A pebbling move  \cite{HB} is defined as the removal of two pebbles from some vertex and the placement of one of these pebbles on an adjacent vertex. The pebbling number \cite{FC}, $f(G)$ of a graph $G$, is the minimum number of pebbles such that regardless of their initial distribution, it is possible to move one pebble to any target vertex $v$ in $G$, using a sequence of pebbling moves. 

Edge pebbling number and cover edge pebbling number was introduced by Priscilla Paul \cite{AP}.
An edge distribution of pebbles is a function form $E(G) \rightarrow N \cup \{0\}$.
An edge pebbling move on $G$ is the process of removing two pebbles from one edge and placing one pebble on an adjacent edge. The cover edge pebbling  number $CP_{E}(G)$ of a graph $G$ is the least number of pebbles needed in a graph $G,$ so that any edge pebbling distribution of $G$  allows us on all edges of $G$ through a number of edge pebbling moves.

\section{Restricted SDC edge Cover Pebbling Number}
 
In this section we introduce a new definition Restricted SDC edge cover pebbling number.
\begin{definition} The restricted edge pebbling distribution is  a distribution of pebbles on the edges of $G$ is placement of pebbles on the edges with the restriction that only even number of pebbles should be  placed on the edges having labels $0$.  Given an SDC labeling of $G$, the restricted SDC edge cover pebbling number of a graph $G$, $\psi_{EC}(G)$, is the least positive integer $m$ for which  any restricted edge pebbling distribution  of $m$ pebbles such that at the end of there are no pebbles on the edges having label $0$ will allow the shifting of a pebble simultaneous to all edges with label $1$ using a sequence of restricted edge pebbling moves. 
\end{definition}

Consider an electrical network system in a city. The poles at the network are the nodes and the cable connecting any two poles is an edge. In this network we consider SDC labeling of this network system by assigning 1 to the cable which is under repair and assign 0 otherwise. Now restricted SDC edge cover pebbling can be thought  of a method to arrive at the cost of transportation to repair the defective cables. In the definition no pebble should remain on the edge with label 0 because there is no repair work involved.

\noindent \textbf{Note:} $p_{e}(ab)$ denotes the number of pebbles on the edge $ab$ of $G$ and $p_{e}(S)$ denotes the total number of pebbles placed on the edges in $S \subseteq G$.  $SDC_{1}$ denotes the set of edges which has label 1 by a method of SDC labeling. If the edges in $SDC_{1}$ receives  at least 1 pebble each then we say there is $SDC_{1}$ cover.

The readers can get the information about the graphs and SDC labeling of graphs $P_{n} \odot K_{1}$, $K_{1,n}$, $S(K_{1,n})$, $B_{n,n}$, $S(B_{n,n})$, $<K_{1,n}^{1} \Delta K_{1,n}^{2}>$, $DS(B_{n,n})$ and $K_{1,3} * K_{1,n}$ in \cite{LP1,LP2}
\section{Main Results}
\begin{theorem} For a comb graph $P_{n} \odot K_{1}$, $\psi_{EC}(P_{n} \odot K_{1}) = 2^{n}-2$.
\end{theorem}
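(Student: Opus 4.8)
\emph{Setup.} My plan is to pass to the line graph $L=L(P_n\odot K_1)$, so that an edge pebbling move on $G=P_n\odot K_1$ becomes an ordinary pebbling move on $L$, an edge distribution becomes a vertex distribution, and the SDC labeling of $G$ becomes a $0/1$ labeling of $V(L)$. Write $v_1,\dots,v_n$ for the spine of the comb, $u_i$ for the tooth at $v_i$, $a_i=v_iv_{i+1}$ $(1\le i\le n-1)$ for the spine edges and $b_i=v_iu_i$ $(1\le i\le n)$ for the tooth edges; by the SDC labeling of $P_n\odot K_1$ in \cite{LP1} the $n-1$ spine edges carry label $1$ and the $n$ tooth edges carry label $0$, so $SDC_1=\{a_1,\dots,a_{n-1}\}$. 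Inside $L$ the vertices $a_1,\dots,a_{n-1}$ induce a path, $b_1$ is a pendant at $a_1$ and $b_n$ a pendant at $a_{n-1}$, each $b_i$ $(2\le i\le n-1)$ is joined to $a_{i-1}$ and $a_i$, and the distances I need are $d_L(b_1,a_i)=i$ and $d_L(b_n,a_i)=n-i$.

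\emph{Lower bound.} I would exhibit one bad restricted distribution: put all the pebbles on the end tooth edge $b_1$. Moving one pebble onto a vertex at distance $i$ from $b_1$ consumes $2^i$ pebbles from the pile, and the $n-1$ targets $a_1,\dots,a_{n-1}$ sit at the pairwise-distinct distances $1,2,\dots,n-1$ from $b_1$, so a pebble delivered to a farther target cannot also be the pebble left on a nearer one. The usual weight/cost bookkeeping then gives that covering $SDC_1$ from a pile on $b_1$ needs at least $2^1+2^2+\dots+2^{n-1}=2^n-2$ pebbles, while a pile of exactly $2^n-2$ is exhausted by the covering and leaves every $b_i$ empty (so the value is sharp). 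Hence $\psi_{EC}(P_n\odot K_1)\ge 2^n-2$.

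\emph{Upper bound.} I would show every restricted distribution $D$ of $2^n-2$ pebbles can be processed. First preprocess: for each tooth edge $b_i$ carrying pebbles, empty it by pushing its pebbles onto an incident spine edge $a_{i-1}$ or $a_i$; this is legal because $D$ places an \emph{even} number of pebbles on the label-$0$ edge $b_i$, it deposits them on $SDC_1$-edges, and it can be done without ever re-entering a label-$0$ edge. All pebbles now lie on the path $a_1-\cdots-a_{n-1}$, so it remains to cover a path, which I would handle by the standard cover-pebbling/weight-function argument together with the estimate $\max_{x\in V(L)}\sum_{i=1}^{n-1}2^{d_L(x,a_i)}=2^n-2$, the maximum being attained exactly at $x=b_1,b_n$; since no move used ever puts a pebble on an $SDC_0$ edge, the ``no pebbles on label-$0$ edges at the end'' condition holds for free. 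A more elementary route is induction on $n$: peel off $v_n,u_n$ and the edges $a_{n-1},b_n$, service that piece with at most $2^{n-1}-1$ pebbles, reduce the remainder to $P_{n-1}\odot K_1$ with the remaining $\ge 2^{n-1}-2$ pebbles, and recombine using $2(2^{n-1}-1)\le 2^n-2$.

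\emph{Main obstacle.} The lower bound is routine once the distances in $L$ are recorded. The work is in the upper bound, and the delicate point is the preprocessing/collapsing step: one must ensure that consolidating the tooth pebbles onto the spine does not waste so many pebbles that a distant spine edge becomes unreachable — equivalently, that the worst restricted distribution really is the single pile on $b_1$ (or $b_n$) rather than some spread-out configuration — and the parity restriction on the label-$0$ edges has to be invoked at exactly the step where it rules out the otherwise problematic configurations.
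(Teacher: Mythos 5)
Your upper-bound strategy is genuinely different from the paper's and, in my view, more principled: the paper's sufficiency argument only checks a short list of sample distributions (a pile on $a_{n}b_{n}$, two pebbles on each pendant edge, and so on) and never performs an exhaustive analysis, whereas your collapse-then-cover scheme handles every restricted distribution at once. Moreover, the ``delicate point'' you flag closes cleanly and without any Sj\"ostrand-type machinery (which in any case does not apply directly, since the weight function vanishes on the $SDC_{0}$ vertices of $L$): if the tooth edges carry $T$ pebbles in total, then $T$ is even, collapsing each tooth onto an incident spine edge costs exactly $T/2$ pebbles, and the spine path on $n-1$ vertices retains at least $2^{n}-2-T/2\ge 2^{n-1}-1$ pebbles. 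Since the ordinary cover pebbling number of a path on $n-1$ vertices is $2^{n-1}-1$, every post-collapse configuration can be covered by moves that never revisit a label-$0$ edge. I would recommend this computation over your inductive alternative, which is vaguer about how the peeled-off piece interacts with the rest.

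The genuine gap is in your lower bound, and it is inherited from the paper. To certify $\psi_{EC}(P_{n}\odot K_{1})\ge 2^{n}-2$ you must exhibit a \emph{restricted} distribution of exactly $2^{n}-3$ pebbles that fails. But $2^{n}-3$ is odd, and every label-$0$ edge must receive an even number of pebbles, so your single pile on $b_{1}$ simply does not exist at that size; the largest failing pile on $b_{1}$ has $2^{n}-4$ pebbles and certifies only $\psi_{EC}\ge 2^{n}-3$. Worse, any restricted distribution of $2^{n}-3$ pebbles places at least one pebble on a spine edge, which covers that target for free (for instance, $2^{n}-4$ pebbles on $b_{1}$ plus one on $a_{1}$ needs only $2^{2}+\cdots+2^{n-1}=2^{n}-4$ from the pile and succeeds), and your own collapse argument, rerun with total $2^{n}-3$ and $T\le 2^{n}-4$, again leaves at least $2^{n-1}-1$ pebbles on the spine and hence succeeds. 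So your method, carried to completion, actually proves $\psi_{EC}\le 2^{n}-3$ under the literal definition. The paper's lower bound makes the same unjustified jump (``we need two more pebbles''), implicitly assuming the extremal configuration must remain a single even pile on a label-$0$ edge. You should either produce a failing odd-size restricted distribution (I do not believe one exists) or note explicitly that the stated value $2^{n}-2$ depends on that non-literal reading of the definition.
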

\begin{proof}
From SDC labeling \cite{LP1}, we have the following edge labels:\\
$g(a_{r}a_{r+1})=1; 1 \leq r \leq n-1$\\
$g(a_{r}b_{r}=0; 1 \leq r \leq n.$\\
 If $2^{n}-4$ pebbles are placed on the edge $a_{n}b_{n}$ then covering $a_{1}a_{2}$ will use $2^{n-1}$ pebbles, covering  $a_{2}a_{3}$ will use $2^{n-2}$ pebbles $\cdots$ and covering $a_{n-2}a_{n-1}$ will use 2 pebbles. Then no pebble will remain to cover $a_{n-1}a_{n}$ which has a label 1 by SDC labeling. In order to cover $a_{n-1}a_{n}$  we need two pebbles.  Hence $\psi_{EC}(P_{n} \odot K_{1})  \geq 2^{n}-2$.

Let $D$ be a distribution of $2^{n}-2$ pebbles edges of $P_{n} \odot K_{1}$.
Let us place all the pebbles on either  $a_{n}b_{n}$ or $a_{1}b_{1}$. Without loss of generality, let us consider $p_{e}(a_{n}b_{n})=2^{n}-2$, then we can get $SDC_{1}$ cover. Let $p_{e}(P_{n} \odot K_{1})<2^{n}-2$.
  If $p_{e}(a_{n-1}a_{n})=2^{n-1}-1$, then we can get $SDC_{1}$ cover. Suppose the pendant edges $a_{r}b_{r}$ $(2 \leq r \leq n)$ receive at least  2 pebbles each we can get $SDC_{1}$ cover.
If $n$ is odd, $r$ is even and if the edges $a_{r}b_{r}$, $2 \leq r \leq n$   have at least  4 pebbles  each then we can  get  $SDC_{1}$. For $n$ is even $r$ is even and the edges $a_{r}b_{r}$, $2 \leq r \leq n-2$   have at least 4 pebbles each and the edge $a_{n}b_{n}$ has at least 2 pebbles then we can get $SDC_{1}$ cover. Placing 3 pebbles on $a_{n-1}a_{n}$ and 2 pebbles each on $n-3$ pendant edges  we are done. So $\psi_{EC}(P_{n} \odot K_{1}) \leq  2^{n}-2$.

Therefore  $\psi_{EC}(P_{n} \odot K_{1}) = 2^{n}-2$.

\end{proof}
\begin{theorem} For a star graph $K_{1,n}$,  $\psi_{EC}(K_{1,n}) = \begin{cases} n-1 & \text{n odd} \\ n & \text{n even}\end{cases}$.
\end{theorem}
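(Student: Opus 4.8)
The plan is to begin from the SDC labeling of $K_{1,n}$ recorded in \cite{LP1}. Write $v$ for the centre and $u_{1},\dots ,u_{n}$ for the leaves, so that the $n$ edges $vu_{i}$ are pairwise adjacent. Since the centre carries an odd label, the $\lfloor n/2\rfloor$ leaves with odd labels give edges of label $1$ and the $\lceil n/2\rceil$ leaves with even labels give edges of label $0$; put $k=|SDC_{1}|=\lfloor n/2\rfloor$, so the claimed value $2k$ equals $n-1$ for odd $n$ and $n$ for even $n$. The proof then has the usual two halves.

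For the lower bound I would exhibit a single bad restricted distribution: place $2k-2$ pebbles on one label-$0$ edge $e_{0}$, which is legal because $2k-2$ is even. Every edge pebbling move taking pebbles off $e_{0}$ destroys two pebbles and produces one, and $e_{0}$ is adjacent to every other edge, so however we play we can land at most $k-1$ pebbles in total on the $k$ edges of $SDC_{1}$; since we must also empty $e_{0}$, at least one label-$1$ edge is left uncovered. Hence $2k-2$ pebbles never suffice, and as no further pebble may legally rest on a label-$0$ edge, two more pebbles are needed, which gives $\psi_{EC}(K_{1,n})\ge 2k$.

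For the upper bound I would take an arbitrary restricted distribution $D$ of $2k$ pebbles and show it is solvable, arguing by cases on the positions of the pebbles as for the comb graph $P_{n}\odot K_{1}$. First, a label-$0$ edge holding $2t$ pebbles can be emptied by $t$ moves, each of which may be aimed at whichever edge of $SDC_{1}$ we like; so if $p_{0}$ (an even number) is the total on the label-$0$ edges, those pebbles convert into $p_{0}/2$ pebbles placeable freely on $SDC_{1}$. It then remains to check that the $p_{1}=2k-p_{0}$ pebbles already on $SDC_{1}$, together with these $p_{0}/2$ extra ones, can be rearranged inside the clique of label-$1$ edges so that each of the $k$ of them carries a pebble: send the $p_{0}/2$ free pebbles to the still-uncovered label-$1$ edges first, and if any remain uncovered, consolidate the $p_{1}$ pebbles (at the two-for-one cost forced by the clique) to finish, verifying that $p_{1}=2k-p_{0}$ always suffices. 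The extreme configurations --- all $2k$ pebbles on one label-$0$ edge (deliver $k$, one to each label-$1$ edge), all $2k$ on one label-$1$ edge (keep two there, ship $k-1$ out), and $2k-2$ on a label-$0$ edge with the remaining two pebbles placed anywhere legal --- are handled directly, and the leftover ``spread'' cases split according to the parity of $n$, since $k=(n-1)/2$ when $n$ is odd and $k=n/2$ when $n$ is even; this is exactly where the two branches of the formula come from. Combining the two bounds yields $\psi_{EC}(K_{1,n})=2k$, that is, $n-1$ for odd $n$ and $n$ for even $n$.

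The step I expect to be the genuine obstacle is making the upper-bound case analysis watertight: one must be sure that no restricted distribution of $2k$ pebbles scatters them so wastefully that some label-$1$ edge becomes unreachable, and in particular one has to track the two-for-one loss incurred when consolidating pebbles that the adversary has spread thinly over several label-$1$ edges, all while guaranteeing that nothing is stranded on a label-$0$ edge at the end.
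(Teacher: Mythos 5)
Your lower bound is where the argument breaks, and it breaks in a way that your own upper-bound machinery exposes. To prove $\psi_{EC}(K_{1,n})\ge 2k$ you must exhibit a restricted distribution of exactly $2k-1$ pebbles that fails; the parity restriction does not forbid such distributions, it only forces at least one pebble to sit on a label-$1$ edge (since the label-$0$ edges carry an even total). But then your own counting finishes the job against you: with $p_{0}$ pebbles on label-$0$ edges and $p_{1}=2k-1-p_{0}\ge 1$ pebbles occupying $j$ label-$1$ edges with multiplicities $m_{1},\dots ,m_{j}$, the number of pebbles deliverable to the uncovered label-$1$ edges is at least $p_{0}/2+\sum_{i}\lfloor (m_{i}-1)/2\rfloor \ge p_{0}/2+(p_{1}-2j)/2=(2k-1-2j)/2 = k-j-\tfrac{1}{2}$, hence at least $k-j$ because it is an integer. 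So every restricted distribution of $2k-1$ pebbles achieves $SDC_{1}$ cover, and your argument actually proves $\psi_{EC}(K_{1,n})\le 2k-1$, contradicting the bound you are trying to establish. The sentence ``as no further pebble may legally rest on a label-$0$ edge, two more pebbles are needed'' silently assumes the $(2k-1)$-st pebble must be added to the same label-$0$ edge, which is not what the definition requires: it quantifies over all restricted distributions of the given size. (The paper's own lower-bound step makes exactly the same jump --- it exhibits a failing distribution of $2k-2$ pebbles and then ``adds two'' --- so you have faithfully reproduced its gap rather than introduced a new one; but as written neither argument establishes the stated value.)

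On the upper bound your route is genuinely different from, and cleaner than, the paper's: the paper merely lists a handful of sample distributions of $n$ (respectively $n-1$) pebbles and checks each, whereas you give a uniform accounting --- $p_{0}/2$ free deliveries from the label-$0$ edges plus $\sum_{i}\lfloor(m_{i}-1)/2\rfloor$ surplus deliveries from the occupied label-$1$ edges, totalling at least $k-j$ when the total is $2k$ --- that handles every distribution at once and needs no case split on the parity of $n$. That half of your proposal can be made fully rigorous with the one-line inequality above. To salvage the theorem as stated you would need either a genuinely failing restricted distribution of $2k-1$ pebbles (which, by the computation above, does not exist on the star) or an explicit convention restricting which totals $m$ count as admissible distributions; as it stands the lower bound is the genuine gap.
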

\begin{proof}
From the  SDC labeling \cite{LP1} we have the following edge labels:\\
$g(aa_{r})=0;$ if $r$ is odd,\\
$g(aa_{r}=1.$ if $r$ is even.\\
\textbf{Case 1.} $n$ is odd.\\
If  we place $n-3$ pebbles on $aa_{1}$ then we cannot cover an edge $aa_{n-1}$. In order to cover an edge $aa_{n-1}$ we need two more pebbles. Hence $\psi_{EC}( K_{1,n})  \geq n-1$.

For proving the sufficient condition we distribute $n-1$ pebbles on $E(K_{1,n})$.
In the SDC labeling there are $\left\lfloor \frac{n}{2} \right\rfloor$ edges having label $1$. Then we have to cover these edges using $n-1$ pebbles on the edge $aa_{r}$, $r$ is odd. Using $n-2$ pebbles on  $aa_{r}$, $(r$ is even) we get $SDC_{1}$ cover. Placing 2 pebbles each on  $aa_{r}$,  $(1 \leq r \leq n-2$  and $r$ is odd),  we can get $SDC_{1}$ cover.  Placing 1 pebble each on the edge $aa_{r}$, ($r$ is even) we are done.

 \textbf{Case 2.} $n$ is even.\\
Placing  $n-2$ pebbles on $aa_{1}$  we cannot cover  $aa_{n}$. In order to cover  $aa_{n}$ we need two more pebbles.  Hence $\psi_{EC}( K_{1,n})  \geq n$.

For proving the sufficient condition we distribute $n$ pebbles on $E(K_{1,n})$.
By  SDC labeling there are $ \frac{n}{2}$ edges having label $1$.
Now the aim is to place one pebble each on $SDC_{1}$ set. In order to cover   $ \frac{n}{2}$ edges of $SDC_{1}$ set we have to place $n$ pebbles  $aa_{r}$, ($1 \leq r \leq n$ and $r$ is odd).
Similarly using $n-1$ pebbles on the edge $aa_{r}$,  ($1 \leq r \leq n$ and  $r$ is even) then we can get $SDC_{1}$ cover. Placing 2 pebbles on the edge  $aa_{r}$,  $1 \leq r \leq n-1$  and $r$ is odd, then we can cover $SDC_{1}$. By placing 1 pebble each on $aa_{r}$, ($1 \leq r \leq n$ and $r$ is even) we are done.
 Thus  $\psi_{EC}(K_{1,n}) = \begin{cases} n-1 & \text{n odd} \\ n & \text{n even}\end{cases}$.
\end{proof}

\begin{theorem} For a  graph $S(K_{1,n})$, $\psi_{EC}(S(K_{1,n})) = 4n-2$.
\end{theorem}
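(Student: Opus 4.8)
The graph $S(K_{1,n})$ is the subdivision of the star $K_{1,n}$: the center $a$ is joined to vertices $u_1,\ldots,u_n$, and each edge $au_r$ is subdivided by a vertex $v_r$, so that $V(S(K_{1,n})) = \{a\} \cup \{v_r : 1 \le r \le n\} \cup \{u_r : 1 \le r \le n\}$ and the edges are $av_r$ and $v_ru_r$ for $1 \le r \le n$. Two edges are adjacent when they share a vertex, so $av_r$ is adjacent to $av_s$ for all $s \ne r$ and to $v_ru_r$; the pendant edge $v_ru_r$ is adjacent only to $av_r$. The plan is first to read off the SDC labeling from \cite{LP1,LP2} to identify which edges lie in $SDC_1$ (the target set) and which have label $0$ (the restricted edges), then prove the lower bound $4n-2$ by exhibiting a bad restricted distribution, and finally prove the upper bound by a worst-case analysis over all restricted distributions of $4n-2$ pebbles.

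**Lower bound.**

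I would pile almost all the pebbles on a single pendant edge, say $v_nu_n$ (a label-$0$ edge, so the count there must be even, which $4n-2$ is when... I should instead pick an edge so parity is automatic, or place the mass on the central edge $av_n$). Concretely, to cover a target edge $av_s$ starting from pebbles on a pendant edge $v_ru_r$ requires moving one pebble across two pebbling moves ($v_ru_r \to av_r \to$ then along... actually $av_r$ and $av_s$ are adjacent, so the cost is: $4$ pebbles on $v_ru_r$ yield $1$ on $av_s$). I would compute the total cost of covering all $\lfloor\tfrac n2\rfloor$ or $\lceil\tfrac n2\rceil$ edges in $SDC_1$ from one such source edge, show it equals $4n-2$ minus a shortfall, i.e. that $4n-3$ pebbles so placed cannot produce an $SDC_1$ cover while respecting the final condition that no pebble remains on a label-$0$ edge, giving $\psi_{EC}(S(K_{1,n})) \ge 4n-2$.

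**Upper bound.**

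For the upper bound I would take an arbitrary restricted distribution $D$ of $4n-2$ pebbles and show an $SDC_1$ cover is reachable. The natural case split is on how the pebbles are spread: (i) if every target edge $av_r \in SDC_1$ already carries a pebble, done; (ii) otherwise, for each uncovered target edge one must route a pebble to it either from its own pendant edge (cost $4$ on $v_ru_r$, since that must hold an even count) or from the center via another label-$0$ central edge $av_s$ (cost $2$, but $av_s$ holds an even count) or by a chain of moves through the hub. The key accounting observation is that an uncovered target edge $av_r$ can be fed cheaply (cost $2$) using pebbles on the adjacent label-$0$ edge $av_r$ itself — wait, $av_r$ has label $1$ only when $r$ is even in the star's labeling; I would mirror the $K_{1,n}$ argument: the subdivision roughly doubles the per-edge transport cost from $2$ to $4$, and the $-2$ correction comes from the last target edge needing no extra overhead, exactly as $\psi_{EC}(K_{1,n})$ was $n$ (even) minus a unit. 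I expect to show $4n-2$ pebbles always suffice by a greedy argument: cover targets one at a time, each costing at most $4$ pebbles drawn from the cheapest available source, and verify the total budget never runs out before the last target.

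**Main obstacle.**

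The hard part will be the upper bound, specifically handling restricted distributions where pebbles sit in small amounts on many different label-$0$ edges (each forced to be even, so at least $2$ each), so that no single edge has enough to perform a move but the total is $4n-2$. Because a pendant edge $v_ru_r$ needs $2$ pebbles just to push one pebble one step to $av_r$, and then $av_r$ must accumulate $2$ to push onward, scattered pebbles can be nearly useless; I will need to argue that the even-parity restriction on label-$0$ edges actually helps (pairs of pebbles are always movable) and carefully bound the worst such spread, showing it still leaves enough concentrated mass — or enough same-parity pairs funneling into the hub — to complete the $SDC_1$ cover. Getting the constant exactly $4n-2$ rather than $4n-1$ or $4n$ will hinge on a tight parity bookkeeping at the final uncovered target edge.
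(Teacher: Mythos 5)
There is a genuine gap, and it starts before any pebbling argument: you never pin down the SDC labeling of $S(K_{1,n})$, and the guess you fall back on (that a central edge $av_r$ is in $SDC_1$ only for $r$ even, so that there are $\lfloor n/2\rfloor$ or $\lceil n/2\rceil$ targets) is wrong. In the labeling the paper uses, \emph{every} central edge $aa_r$ ($1\le r\le n$) gets label $1$ and \emph{every} pendant edge $a_rb_r$ gets label $0$; this is consistent with the balance condition $|e_g(0)-e_g(1)|\le 1$ since the graph has $2n$ edges. So $SDC_1$ has $n$ elements, not about $n/2$. This matters numerically: with a pile of pebbles on one pendant edge $a_nb_n$, the adjacent target $aa_n$ costs $2$ pebbles and each of the other $n-1$ targets $aa_r$ costs $4$ (two moves: $a_nb_n\to aa_n\to aa_r$), so the total transport cost is $2+4(n-1)=4n-2$. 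That is exactly where the constant comes from; your version with $\lceil n/2\rceil$ targets would price out at roughly $2n+2$ and could not reproduce the bound $4n-2$, so neither your lower-bound distribution nor your upper-bound budget can close.

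Beyond that, the write-up is a plan rather than a proof: the lower bound is never actually computed (you visibly hesitate over where to put the pile and over the parity constraint), and the upper bound is an announced greedy argument whose hardest case --- pebbles scattered in pairs over many label-$0$ pendant edges --- you flag but do not resolve. For comparison, the paper's lower bound is the computation above with $4n-4$ pebbles on $a_nb_n$ (one target is left uncovered, and parity on the label-$0$ edge forces the increment to be $2$), and its upper bound, such as it is, checks a list of extremal distributions (all $4n-2$ pebbles on a label-$1$ edge; at least $2$ on every pendant edge, each pair pushing one pebble onto its own target $aa_r$; $2n-1$ on a single $SDC_1$ edge; etc.). Your instinct that the pendant-pair case is the crux is right --- note that $2$ pebbles on $a_rb_r$ cover $aa_r$ directly at cost $2$, not $4$, which is the observation that makes that case work --- but you cannot carry any of this out until the target set is identified correctly.
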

\begin{proof}
In SDC labeling \cite{LP1} we have the following pattern of SDC edge labels:\\
$g(aa_{r})=1$, $1 \leq r \leq n$;\\
$g(a_{r}b_{r})=0$, $1 \leq r \leq n$.

If $p_{e}(a_{n}b_{n})=4n-4$  then we cannot cover $aa_{1}$ which has label 1. In order to cover $aa_{1}$ we need two more pebbles. Hence $\psi_{EC}(S(K_{1,n})) \geq 4n-2$.

For proving the sufficient condition we distribute $4n-2$ pebbles on $E(S(K_{1,n}))$.
In SDC labeling we have $n$  edges  with labels 1. In order to cover these edges we use  $4n-2$ pebbles on  $aa_{r}$, $(1 \leq r \leq n)$.
Let $p_{e}(S(K_{1,n}))< 4n-2$.
 Suppose $p_{e}(a_{r}b_{r}) \geq 2,$ $(\forall \ 1 \leq r \leq n)$. Then we can get  $SDC_{1}$ cover. If  we place $2n-1$ pebbles on  any one the edges of $SDC_{1}$ set then we are done. Suppose we distribute pebbles on the edges of $SDC_{0}$ set and $SDC_{1}$  set we use less than or equal to $2n-1$ pebbles. Placing 1 pebble each on the edge $aa_{r},$ $(1 \leq r \leq n)$ we get $SDC_{1}$ cover. 
Hence $\psi_{EC}( S(K_{1,n})) \leq 4n-2$.
\end{proof}

\begin{theorem} 
For a Bistar graph $B_{n,n}$, $\psi_{EC}(B_{n,n}) = \begin{cases} 3n+3 & \text{n odd} \\ 3n & \text{n even}\end{cases}$.
\end{theorem}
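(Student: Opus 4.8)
My plan is to start from the SDC labeling of $B_{n,n}$ given in \cite{LP1}. With $u,v$ the two centres, pendant vertices $u_{1},\dots,u_{n}$ attached to $u$ and $v_{1},\dots,v_{n}$ attached to $v$, and edges $uv$, $uu_{i}$, $vv_{i}$ ($1\le i\le n$), that labeling makes $g(u)$ and $g(v)$ of opposite parity, so $g(uv)=0$, and splits the remaining labels so that exactly half of the $n$ edges $uu_{i}$ and exactly half of the $n$ edges $vv_{i}$ receive label $1$. Thus $|SDC_{1}|=n+1$ with $\frac{n+1}{2}$ label-$1$ edges at each centre when $n$ is odd, and $|SDC_{1}|=n$ with $\frac{n}{2}$ at each centre when $n$ is even. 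The geometric fact I will lean on throughout is adjacency of edges: $uv$ shares a vertex with every pendant edge, two pendant edges at the same centre are adjacent, but a pendant edge at $u$ shares no vertex with a pendant edge at $v$, so these are at distance $2$ with $uv$ as the only link. Consequently a single pebble can be pushed from one $u$-pendant edge to one $v$-pendant edge only at a cost of $4$, whereas every other transfer between edges costs $2$.

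For the lower bound I would put $\psi_{EC}(B_{n,n})-2$ pebbles on one label-$0$ pendant edge, say $uu_{2}$; this is $3n+1$ when $n$ is odd and $3n-2$ when $n$ is even, both even, hence a legal restricted distribution. Covering the label-$1$ edges at $u$ costs $2\cdot\frac{n+1}{2}=n+1$ (resp. $2\cdot\frac{n}{2}=n$) pebbles, and covering the label-$1$ edges at $v$ forces each of them to be fed through $uv$ at cost $4$, using $2(n+1)$ (resp. $2n$) more, after which $uu_{2}$ must itself be emptied, so $3(n+1)$ (resp. $3n$) pebbles at $uu_{2}$ are exactly what is needed. With only $\psi_{EC}(B_{n,n})-2$ pebbles there, one label-$1$ edge at $v$ is left uncovered and a single pebble is stranded on $uv$, so two further pebbles are required; hence $\psi_{EC}(B_{n,n})\ge 3n+3$ for $n$ odd and $\ge 3n$ for $n$ even.

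For the upper bound I take an arbitrary restricted distribution $D$ of $3n+3$ (resp. $3n$) pebbles and show an $SDC_{1}$ cover with no leftover pebbles on label-$0$ edges is always reachable. The worst configuration is all pebbles on a single label-$0$ pendant edge, where the count above shows the budget is exactly enough; if everything sits on $uv$, then all $n+1$ (resp. $n$) label-$1$ edges are neighbours and $2(n+1)\le 3n+3$ (resp. $2n\le 3n$) pebbles suffice while $uv$ empties. For a general $D$ I would use three standing observations: any label-$1$ edge already carrying a pebble is covered and can absorb leftover pairs; every label-$0$ edge is adjacent to a label-$1$ edge, so a stranded pair on a label-$0$ edge can always be discharged onto a covered label-$1$ edge, which settles the ``no pebbles on label-$0$ edges'' clause; and $uv$ together with any two same-side pendant edges lets us funnel pebbles to wherever the remaining deficit is cheapest to cover. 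I would then split by whether the pendant edges on each side carry at least two pebbles, exactly two (one relay move available), or none, and check in each case that $3n+3$ (resp. $3n$) pays the deficit on both sides plus the cost-$4$ relays across $uv$. Together with the lower bound this yields the claimed values.

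The hard part will be the upper-bound case analysis, and in particular the sub-case where the adversary piles almost all pebbles on label-$0$ pendant edges lying on the same side as the majority of the label-$1$ edges that still need covering, so that nearly everything must cross $uv$ at the higher cost while each label-$0$ edge must still be emptiable. The real bookkeeping burden is tracking the parity of every pile as it drains, so that no lone pebble is ever marooned on a label-$0$ edge; the small distances between edges and the presence of label-$1$ pendant edges at both centres are exactly what keep the budget sufficient.
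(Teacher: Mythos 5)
Your lower-bound construction is essentially the paper's: it also piles $3n+1$ (resp.\ $3n-2$) pebbles on a single label-$0$ pendant edge and counts the cost of reaching the $\lceil n/2\rceil$ label-$1$ edges at the same centre (two pebbles each) and at the opposite centre (four pebbles each, through $uv$). But your closing inference, that ``two further pebbles are required, hence $\psi_{EC}(B_{n,n})\ge 3n+3$,'' skips a step: a failing distribution of $3n+1$ pebbles only gives $\psi_{EC}\ge 3n+2$. To reach $3n+3$ you must exhibit a failing \emph{restricted} distribution of $3n+2$ pebbles, and here the parity restriction works against you: $3n+2$ is odd while every label-$0$ edge must carry an even count, so any such distribution is forced to place a pebble on some label-$1$ edge, which covers that edge for free. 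For instance, $3n+1$ pebbles on $uu_{2}$ together with one pebble on a label-$1$ edge at $u$ leaves a covering cost of $(n-1)+2(n+1)=3n+1$, exactly what is available, so the extra pebble rescues the cover rather than being wasted. The step from ``$3n+1$ fails'' to ``$3n+3$ is necessary'' therefore needs an explicit argument about $3n+2$-pebble distributions (the same remark applies to the even case with $3n-1$ pebbles).

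The larger gap is the upper bound. You verify only the two extreme configurations (everything on one label-$0$ pendant edge; everything on $uv$) and then announce a case split ``by whether the pendant edges on each side carry at least two pebbles, exactly two, or none'' that is never carried out. The sufficiency direction is precisely the claim that \emph{every} restricted distribution of $3n+3$ (resp.\ $3n$) pebbles reaches an $SDC_{1}$ cover with no pebble left on a label-$0$ edge, and the adversarial distributions you yourself flag --- nearly all pebbles parked on label-$0$ pendant edges on one side, forcing cost-$4$ relays across $uv$ while every label-$0$ pile must drain to zero with the right parity --- are exactly the ones left unchecked. The paper's own sufficiency argument, whatever its shortcomings, at least commits to a concrete list of distributions it claims are covered; your text defers all of that bookkeeping. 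As written, the proposal supplies a correct cost model and a plan for the upper bound, but not a proof of it.
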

\begin{proof}
In SDC labeling \cite{LP1} we have the following pattern of edge labels:\\
$g(ab)=0$;\\
$g(aa_{r})=\begin{cases} 1 & \text{r is odd} \\ 0 & \text{ r is even}\end{cases}$, $1 \leq r \leq n$;\\
$g(bb_{r})=\begin{cases} 1 & \text{r is odd} \\ 0 & \text{ r is even}\end{cases}$, $1 \leq r \leq n$.\\

\textbf{Case 1.} $n$ is odd.\\
Placing $3n+1$ pebbles on $bb_{n-1}$, then we cannot cover the edge $aa_{1}$ which has label 1 by SDC labeling.  Hence $\psi_{EC}(B_{n,n})  \geq 3n+3$.

For proving the sufficient condition we distribute $3n+3$ pebbles on $E(B_{n,n})$.
From SDC labeling method  $n+1$ edges  receive  label 1. In order to get $SDC_{1}$ cover we use $3n+3$ pebbles on the edge $aa_{r}$ or $bb_{r}$ if $r$ is even.
Let $p_{e}(B_{n,n})<3n+3$.
$SDC_{1}$ cover is ensured for the following distributions:\\
for $ 1 \leq r \leq n$
\begin{enumerate}
\item $p_{e}(aa_{r})=3n+2$ or $p_{e}(bb_{r})=3n+2$, ($r$ is odd)
\item $p_{e}(ab)=2n+2$
\end{enumerate}
  If we place $n+1$ pebbles on $aa_{r}$ or $bb_{r}$, $1 \leq r \leq n$ where $r$ is even and  $n+1$ pebbles on $ab$ then we reach $SDC_{1}$ cover. If $p_{e}(aa_{r})=n$ and $p_{e}(bb_{r})=n$, $(1 \leq r \leq n$ and $r$ is odd) then we reach $SDC_{1}$ cover.  If $p_{e}(aa_{r})=\frac{n-1}{2}$, $p_{e}(bb_{r})=\frac{n-1}{2}$, $(1 \leq r \leq n$ and $r$ is even) and $p_{e}(ab)=4$ then we get  $SDC_{1}$ cover. We have $\left\lceil \frac{n}{2} \right\rceil$  edges with label 1  that are incident  with each of two apex vertices. With $n+1$ pebbles  distributed on each set of $\left\lceil \frac{n}{2} \right\rceil$ edges that are incident  with each of two apex vertex we reach $SDC_{1}$ cover.

\textbf{Case 2.} $n$ is even.\\
If $p_{e}(bb_{n})=3n-2$,  then we cannot cover the edge $aa_{1}$ which has label 1 by SDC labeling .  Hence $\psi_{EC}(B_{n,n})  \geq 3n$.

For proving the sufficient condition we distribute $3n$ pebbles on $E(B_{n,n})$.
From the  SDC labeling pattern we have $n$ number of edges labeled with 1. In order to get $SDC_{1}$ cover  we place $3n$ pebbles on the edge $aa_{r}$ or $bb_{r}$, ($ 1 \leq r \leq n$ $r$ is even) then we are done.
Let $p_{e}(B_{n,n})<3n$.
$SDC_{1}$ cover is ensured for the following distributions:\\
for $ 1 \leq r \leq n$
\begin{enumerate}
\item $p_{e}(aa_{r})=3n-1$, or $p_{e}(bb_{r})=3n-1,$($r$ is odd)
\item $p_{e}(ab)=2n$
\end{enumerate} 
 If we place $n$ pebbles on $aa_{r}$ or $bb_{r}$, $1 \leq r \leq n$ where $r$ is even and  n pebbles on $ab$ then we get  $SDC_{1}$ cover. If $p_{e}(aa_{r})=n-1$ and $p_{e}(bb_{r})=n-1$, ($ 1 \leq r \leq n$ and $r$ is odd) then we reach $SDC_{1}$ cover.  If $p_{e}(aa_{r})=\frac{n}{2}$, $p_{e}(bb_{r})=\frac{n}{2}$,  ($ 1 \leq r \leq n$ and $r$ is even) and $p_{e}(ab)=n$ then we get $SDC_{1}$ cover.  We have $\frac{n}{2}$  edges with label 1  that are incident  with each of two apex vertices. With $n$ pebbles  distributed on each set of $ \frac{n}{2}$ edges that are incident  with each of two apex vertex we reach $SDC_{1}$ cover.\\
 Hence, $\psi_{EC}(B_{n,n}) = \begin{cases} 3n+3 & \text{n odd} \\ 3n & \text{n even}\end{cases}.$
\end{proof}

\begin{theorem} 
For a graph $S(B_{n,n})$, $\psi_{EC}(S(B_{n,n})) = 20n+6$.
\end{theorem}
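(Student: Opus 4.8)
The plan is to mimic the structure of the proofs of the preceding theorems: first exhibit a ``bad'' distribution that fails to produce an $SDC_{1}$ cover using one fewer pebble, giving the lower bound $\psi_{EC}(S(B_{n,n})) \geq 20n+6$, and then show that every restricted edge pebbling distribution of $20n+6$ pebbles does admit an $SDC_{1}$ cover, giving the matching upper bound. First I would recall from \cite{LP1,LP2} the explicit SDC edge labeling of $S(B_{n,n})$: the subdivided bistar has the central edge $ab$ subdivided, each spoke $aa_r$ and $bb_r$ subdivided, and each pendant edge $a_rb_r$-type edge subdivided, so that $|E(S(B_{n,n}))|$ is roughly $4n+3$ edges; I would write down which of these edges carry label $0$ and which carry label $1$, noting that the number of label-$1$ edges is $\lceil |E|/2 \rceil$ or $\lfloor |E|/2 \rfloor$. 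The key quantity is the ``cost'' $2^{d}$ of covering a label-$1$ edge at edge-distance $d$ from the edge where pebbles are massed; the worst case concentrates all pebbles on a single edge (a subdivided pendant edge at one end) that is farthest, in the line graph sense, from some label-$1$ edge incident to the opposite apex.

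For the lower bound I would place $20n+4$ pebbles on the edge of $SDC_0$ that is extremal — the outermost subdivided pendant edge at the $b$-side, say — and compute the greedy cover cost: covering the label-$1$ edges nearest first consumes $2 + 2 + \cdots$ for the near edges, then $4 + 4 + \cdots$, up through the edges of the opposite star which sit at distance making their cost dominate, and I would check that the total needed to also cover the single farthest label-$1$ edge $aa_1'$ (the subdivided spoke at the $a$ apex) is exactly $20n+6$, so $20n+4$ leaves that last edge uncovered. The arithmetic here is the sum $\sum 2^{d_i}$ over the label-$1$ edges with their distances $d_i$ from the chosen source edge, and I expect it to collapse to $20n+6$ after using $\sum_{i} 2 \cdot(\text{number of label-1 edges at each level})$; the constant $6$ comes from the three ``long'' branches created by subdividing $ab$ together with the two apexes. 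For the upper bound I would run the same case analysis style as in Theorems 3--5: (i) all $20n+6$ pebbles on one $SDC_0$ edge always works; (ii) $p_e$ of a single $SDC_1$ edge being large enough works immediately; (iii) if each relevant $SDC_0$ edge (or each pair of symmetric $SDC_0$ edges at distance $1$ from a label-$1$ edge) carries at least $2$, respectively $4$, pebbles, we reach the cover; and (iv) a residual ``spread-out'' distribution with $\le$ small constant on each edge is handled directly, placing one pebble on each label-$1$ edge. Because the restriction forces an even number of pebbles on every label-$0$ edge, any pile on a label-$0$ edge can be fully exploited (no wasted single pebble), which is exactly what makes the count come out even plus the additive constant.

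The hard part will be the upper-bound case analysis: unlike $S(K_{1,n})$, the subdivided bistar has two apexes joined through a subdivided central edge, so a distribution can be ``balanced'' between the two sides in a way that no single edge carries enough pebbles to reach the far apex, yet the union does — one must argue that after pushing pebbles inward along each star toward its apex, the two resulting piles near $a$ and near $b$ together with whatever sits on the subdivided central edge suffice to finish every remaining label-$1$ edge. I would organize this by a weight/potential argument: assign each label-$1$ edge the cost $2^{d}$ from the nearest apex, show $\sum 2^{d} = 20n+6 - (\text{savings})$ and that any distribution not of one of the explicitly listed ``extremal'' forms has strictly smaller total cost, hence is coverable; the evenness restriction on $SDC_0$ edges is invoked to guarantee no pebble is stranded. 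Once the two bounds meet, $\psi_{EC}(S(B_{n,n})) = 20n+6$ follows.
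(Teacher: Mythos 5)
Your proposal follows essentially the same route as the paper: the lower bound comes from massing $20n+4$ pebbles on an outermost subdivided pendant edge $b_{1}^{'}b_{1}$ and checking that the total cost $\sum 2^{d_i}$ of covering the $2n+1$ label-$1$ edges, namely $2+4(n-1)+8+16n=20n+6$, exceeds it, while the upper bound is the same style of case analysis over distributions of $20n+6$ pebbles. One small slip worth fixing: the bistar $B_{n,n}$ has no edges of the form $a_{r}b_{r}$, so $S(B_{n,n})$ has $4n+2$ edges (the subdivided central edge $aa^{'}, a^{'}b$ together with the $4n$ subdivided spokes), but this does not affect your lower-bound computation or the overall argument.
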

\begin{proof}
In SDC labeling \cite{LP2} we have the following pattern of SDC edge labels:\\
$g(aa^{'})=1$;\\
$g(a^{'}b)=0$;\\
$g(aa_{r}^{'})=1$, $1 \leq r \leq n$;\\
$g(a_{r}^{'}a_{r})=0, 1 \leq r \leq n$;\\
$g(bb_{r}^{'})=1$, $1 \leq r \leq n$;\\
$g(b_{r}^{'}b_{r})=0, 1 \leq r \leq n$.\\

If $p_{e}(b_{1}^{'}b_{1})=20n+4$,  then we cannot cover the edge $bb_{1}^{'}$ which has  label 1 by SDC labeling. In order to cover the edges we require two additional  pebbles. Hence $\psi_{EC}(S(B_{n,n}))  \geq 20n+6$.

For proving the sufficient condition we distribute $20n+6$ pebbles on $E(S(B_{n,n}))$.
If $p_{e}(b_{r}^{'}b_{r})=20n+6$, ($1 \leq r \leq n$) then we reach $SDC_{1}$ cover.
Let $p_{e}(S(B_{n,n}))< 20n+6$.
If there is  an edge in $\{a_{r}^{'}a_{r} : 1 \leq r \leq n\}$ with $20n+2$ pebbles then we reach $SDC_{1}$ cover. If $p_{e}(bb_{r}^{'})=10n+3$, ($1 \leq r \leq n$) then we can cover the $SDC_{1}$.  If $p_{e}(aa_{r}^{'})=10n+1$, ($1 \leq r \leq n$) then we can cover the $SDC_{1}$.  Using  $6n+2$ pebbles on $a^{'}b$  we get  $SDC_{1}$ cover. If $p_{e}(aa^{'})=6n+1$ then  $SDC_{1}$ cover is obtained. Placing $4n$ pebbles on any one of the edges in $\{a_{r}^{'}a_{r} : 1 \leq r \leq n\}$ and $4n-2$  pebbles on any one of the edge in $\{b_{r}^{'}b_{r} : 1 \leq r \leq n\}$  we reach $SDC_{1}$ cover. For $1 \leq r \leq n$, if $p_{e}(aa_{r}^{'})=2n+1$ and $p_{e}(bb_{r}^{'})=2n-1$  then  $SDC_{1}$ cover is ensured. Suppose we place 2 pebbles on $a^{'}b$  and $4n-2$ pebbles   on any one of the edges in $\{a_{r}^{'}a_{r} : 1 \leq r \leq n\}$ and  $4n-2$ pebbles  on any one of the edges in $\{b_{r}^{'}b_{r} : 1 \leq r \leq n\}$ then we  can reach $SDC_{1}$ cover. For  $1 \leq r \leq n$, if we  place 2 pebbles on $a^{'}b$  and $2n-1$ pebbles on  $aa_{r}^{'}$ and  $2n-1$ pebbles on $bb{r}^{'}$ then we can reach $SDC_{1}$ cover.  If we place $2n+1$ pebbles on $aa^{'}$ and 1 pebble each on $b_{r}^{'}b_{r},$ $1 \leq r \leq n$ we obtain $SDC_{1}$ cover. Put $4n+1$ pebbles on $aa^{'}$ and 2 pebbles each on $a_{r}^{'}a_{r}$ ($ 1 \leq r \leq n$)  then we  get $SDC_{1}$ cover. If $p_{e}(aa^{'})=2n+1$ and $p_{e}(a^{'}b)=2n$ then we reach $SDC_{1}$ cover. Placing at least 2 pebbles on each of the pendant edges we can cover the edges $bb_{r}^{'}$ and $aa_{r}^{'}$ $(1 \leq r \leq n)$. Place  4 pebbles additionally on one of the edges in   $\{aa_{r}^{'}: 1 \leq r \leq n\}$ to move a pebble to $aa^{'}$. Thus we are done.\\
Therefore $\psi_{EC}(S(B_{n,n}))  = 20n+6$.

\end{proof}

\begin{theorem} 
For $<K_{1,n}^{(1)} \Delta K_{1,n}^{(2)}>$, $\psi_{EC}(<K_{1,n}^{(1)} \Delta K_{1,n}^{(2)}>) = \begin{cases} 3n+3 & \text{n odd} \\ 3n+2 & \text{n even}\end{cases}$.
\end{theorem}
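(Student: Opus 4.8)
The plan is to follow the two-step template of the preceding theorems: exhibit a restricted edge pebbling distribution of $\psi_{EC}-1$ pebbles that leaves some label-$1$ edge uncovered (lower bound), then show every restricted distribution of the claimed size forces an $SDC_1$ cover (upper bound). First I would record from \cite{LP2} the explicit SDC edge labeling of $\langle K_{1,n}^{(1)} \Delta K_{1,n}^{(2)}\rangle$ — writing $a,b$ for the two star apexes, $a_r,b_r$ ($1\le r\le n$) for the leaves, and $w$ for the vertex creating the ``$\Delta$'', together with the few edges joining $a$, $b$, $w$ — so that I know exactly which edges carry label $1$ and how they sit relative to the label-$0$ edges.

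For the lower bound I would pick a label-$0$ edge $e_0$ on which the parity restriction permits a large even pile, and a label-$1$ edge $e^{*}$ that is as far as possible from $e_0$ in the pebbling sense; piling all available pebbles on $e_0$, the count needed both to reach every label-$1$ edge and to still slide one pebble along $e^{*}$ is exactly $3n+3$ when $n$ is odd and $3n+2$ when $n$ is even, so with one fewer pebble $e^{*}$ stays uncovered. The split into two cases is forced by the rule that a label-$0$ edge may hold only an even number of pebbles: when $n$ is even this bumps the threshold from $3n+1$ up to $3n+2$, just as it did for $B_{n,n}$ and for $K_{1,n}$.

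For the upper bound the bulk of the work is a case analysis on where the pebbles of a restricted distribution $D$ sit. I would first settle the extremal piles — all pebbles on one pendant edge, or on one of the edges joining $a$, $b$, $w$ — by a direct count showing $3n+3$ (resp.\ $3n+2$) pebbles suffice there. Then, for a spread-out $D$, the idea is to cover each label-$1$ pendant edge locally (two pebbles on a label-$0$ edge at the same apex push one pebble onto it) and route the remaining budget toward the few label-$1$ edges near $a$, $b$, $w$; the sub-cases are organised by whether the surplus near $a$, near $b$, or on the connecting edges is large enough to finish those, and each is checked against an explicit dominating distribution (e.g.\ $p_e(aa_r)=3n+2$, a balanced split $p_e(aa_r)=p_e(bb_r)=n$, a large pile on a connecting edge, and so on, with their $n$-even analogues), exactly in the manner used for $B_{n,n}$.

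The genuine obstacle is proving that this list of sub-distributions is exhaustive: one must show that no mixed restricted distribution of $3n+3$ (resp.\ $3n+2$) pebbles can dodge an $SDC_1$ cover, which amounts to verifying that the cost of simultaneously covering all label-$1$ pendant edges and the label-$1$ edges near the triangle is minimised precisely by the extremal pile used for the lower bound. Carrying the even-$n$ parity constraint consistently through every sub-case — since that constraint is exactly what separates $3n+2$ from $3n+1$ — is the delicate part; a weight-function argument assigning each pebble on an edge $e$ the value $2^{-d(e,\,SDC_1)}$ and checking that the total exceeds $|SDC_1|$ would make the exhaustiveness transparent, though the paper's enumerative style can be matched directly.
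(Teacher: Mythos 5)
Your outline reproduces the paper's template (a single large stack on a label-$0$ pendant edge for the lower bound, then an enumeration of distributions for the upper bound), but as written it is a plan rather than a proof, and the two places where you defer the work are precisely where the content of the theorem lies. You never fix the labeling: in the paper the label-$1$ edges are $ab$, the $aa_{r}$ with $r$ even and the $bb_{r}$ with $r$ odd, while $ax$, $bx$ and the complementary pendant edges carry label $0$; without this the totals $3n+3$ and $3n+2$ cannot even be checked. Moreover your lower-bound construction is not legal in the odd case: you propose to place $\psi_{EC}-1=3n+2$ pebbles on a label-$0$ edge $e_{0}$, but $3n+2$ is odd when $n$ is odd, so this is not a restricted distribution. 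The correct computation is that the covering cost from $aa_{1}$ is $\sum_{e\in SDC_{1}}2^{d(aa_{1},e)}=(n-1)+2+2(n+1)=3n+3$, so the largest admissible single stack that fails is $3n+1$ pebbles on $aa_{1}$ (which is what the paper uses); this by itself only certifies $\psi_{EC}\geq 3n+2$, and to reach $3n+3$ one must exhibit a failing restricted distribution of $3n+2$ pebbles (for instance $3n$ on $aa_{1}$ together with $2$ on $ax$). Your proposal supplies neither the count nor the extra step, and your remark that the parity constraint is what separates the even case from $3n+1$ has the cases reversed: it is in the odd case that $\psi_{EC}-1$ is odd.

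On the upper bound you concede that exhaustiveness of the case list is ``the genuine obstacle'' and leave it unresolved; since that is the entire sufficiency proof, nothing has actually been established. The repair you suggest does not work as stated: a weight function of the form $2^{-d(e,\,SDC_{1})}$ compared against $|SDC_{1}|$ certifies at best that \emph{some} label-$1$ edge can be reached, not that all of them can be covered \emph{simultaneously}, which is what an $SDC_{1}$ cover requires. A correct replacement would be a stacking-type argument (the worst restricted distribution is a single admissible stack on the edge maximizing $\sum_{e\in SDC_{1}}2^{d(\cdot,e)}$), but that theorem is proved for ordinary cover pebbling and would have to be re-established under the parity restriction on label-$0$ edges before you could invoke it. Until either that lemma is proved or the explicit case analysis is carried out and shown exhaustive, the upper bound is missing.
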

\begin{proof}
In SDC labeling \cite{LP2} we have the following pattern of SDC edge labels:\\
$g(ax)=0$;\\
$g(bx)=0$;\\
$g(ab)=1$;\\
$g(a^{'}b)=0$;\\
$g(aa_{r})=\begin{cases} 0 & \text{r is odd} \\ 1 & \text{ r is even}\end{cases}$, $1 \leq r \leq n$;\\
$g(bb_{r})=\begin{cases} 1 & \text{r is odd} \\ 0 & \text{ r is even}\end{cases}$, $1 \leq r \leq n$;\\

\textbf{Case 1.} $n$ is odd.\\
If $p_{e}(aa_{1})=3n+1$,  then we cannot cover an edge $ab$ which has  label 1.  Hence $\psi_{EC}(<K_{1,n}^{(1)} \Delta K_{1,n}^{(2)}>) \geq 3n+3$.

For proving the sufficient condition we distribute $3n+3$ pebbles on $E(<K_{1,n}^{(1)} \Delta K_{1,n}^{(2)}>)$.
If $p_{e}(aa_{r})=3n+3$ or $p_{e}(ax)=3n+3$ ($ 1 \leq r \leq n$ and $r$ is odd) then we get $SDC_{1}$ cover. Suppose $p_{e}(<K_{1,n}^{(1)} \Delta K_{1,n}^{(2)}>) <3n+3$.\\
$SDC_{1}$ cover is ensured for the following distributions:\\
for $ 1 \leq r \leq n$
\begin{enumerate}
\item $p_{e}(aa_{r})=3n+2$, ($r$ is even)
\item $p_{e}(bb_{r})=3n+1$ or $p_{e}(bx)=3n+1$, ($r$ is even)
\item $p_{e}(bb_{r})=3n$, ($r$ is odd)
\item $p_{e}(ab)=2n+1$
\end{enumerate}
 Placing $n+1$ pebbles on $aa_{r}, r$ is odd  and $n+1$ pebbles on $bb_{r}, r$ is even then we can reach $SDC_{1}$ cover. If we place 2 pebbles each on the pendant edges having label o then  $SDC_{1}$ cover is ensured.
If $p_{e}(ax)=n+1$ and $p_{e}(bx)=n+1$ then we can easily reach $SDC_{1}$ cover. Placing $n$ pebbles on $aa_{r}, r$ is even  and $bb_{r}, r$ is odd will get as $SDC_{1}$ cover. If $p_{e}(ab)=n$ and $p_{e}(bx)=n+1$ then we reach $SDC_{1}$ cover. 
Thus $\psi_{EC}(<K_{1,n}^{(1)} \Delta K_{1,n}^{(2)}>) = 3n+3$.

\textbf{Case 2.} $n$ is even.\\
If $p_{e}(aa_{1})=3n$,  then we cannot cover the edge $ab$ which has  label 1.  Hence $\psi_{EC}(<K_{1,n}^{(1)} \Delta K_{1,n}^{(2)}>) \geq 3n+2$.

For proving the sufficient condition we distribute $3n+2$ pebbles on $E(<K_{1,n}^{(1)} \Delta K_{1,n}^{(2)}>)$.
If $p_{e}(aa_{r})=3n+2$ ($r$ is odd) or $p_{e}(ax)=3n+2$ $p_{e}(bb_{r})=3n+2$ ($r$ is even) or $p_{e}(bx)=3n+2$ ,  then we can cover the edges which have label 1. Suppose $p_{e}(<K_{1,n}^{(1)} \Delta K_{1,n}^{(2)}>) <3n+2$. 
$SDC_{1}$ cover is ensured for the following distributions:\\
for $ 1 \leq r \leq n$
\begin{enumerate}
\item $p_{e}(aa_{r})=3n+1$, ($r$ is even) or $p_{e}(bb_{r})=3n+1$ ($r$ is odd)
\item $p_{e}(ab)=2n+1$
\end{enumerate}
  Placing $n+2$ pebbles on $aa_{r}, r$ is odd  and $bb_{r}, r$ is even then we can get $SDC_{1}$ cover. Placing 2 pebbles each on the pendant edges having label 0  and 2 pebbles on the edge $ax$ then we reach $SDC_{1}$ cover. If $p_{e}(ax)=n+2$ and $p_{e}(bx)=n$ then we can easily obtain $SDC_{1}$ cover. Placing $n+1$ pebbles on $aa_{r}, r$ is even  and $n-1$ pebbles on $bb_{r}, r$ is odd leads to $SDC_{1}$ cover. If $p_{e}(ab)=n$ and $p_{e}(bx)=n$ it is easy to get $SDC_{1}$ cover . 
Thus $\psi_{EC}(<K_{1,n}^{(1)} \Delta K_{1,n}^{(2)}>) = 3n+2$.
\end{proof}

\begin{theorem} 
For a graph $DS(B_{n,n})$, $\psi_{EC}(DS(B_{n,n})) = 8n+2$.
\end{theorem}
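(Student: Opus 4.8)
The plan is to follow the same two-part template used throughout the paper: first establish the lower bound $\psi_{EC}(DS(B_{n,n})) \geq 8n+2$ by exhibiting a single bad restricted edge pebbling distribution, and then prove the upper bound $\psi_{EC}(DS(B_{n,n})) \leq 8n+2$ by checking that every restricted distribution of $8n+2$ pebbles admits an $SDC_1$ cover. To do this I first need to recall from \cite{LP2} the explicit SDC labeling of the double star / shadow-type graph $DS(B_{n,n})$: I would write down which edges carry label $1$ and which carry label $0$, identify the set $SDC_1$ and its cardinality, and note the distances in the graph between a generic label-$0$ pendant edge (where pebbles can be piled) and the farthest label-$1$ edge. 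The weighting $8n+2$ strongly suggests that the worst case is piling pebbles on one pendant edge at ``cost distance'' giving a $2^{\text{dist}}$-type factor of $4$ per intermediate edge together with an additive $2$ for the final uncovered label-$1$ edge, analogous to the $4n-2$ and $20n+6$ computations in Theorems 3 and 5.

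For the lower bound I would place $8n$ pebbles (an even number, respecting the restriction on label-$0$ edges) on the single pendant edge that is farthest in the relevant sense from some specific label-$1$ edge $e^\ast$, and argue that after greedily covering the other $SDC_1$ edges no pebbles are left to reach $e^\ast$, and that two more pebbles are genuinely needed; this shows $8n+1$ pebbles can fail, hence $\psi_{EC} \geq 8n+2$. For the upper bound I would enumerate, exactly as the earlier proofs do, the ``extreme'' distributions: all $8n+2$ pebbles concentrated on one label-$0$ pendant edge of each type; all of them on one label-$0$ internal edge; all on a label-$1$ edge; and then the genuinely mixed distributions, showing in each case that the pebbles suffice to drop one pebble onto every edge of $SDC_1$. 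The key quantitative checks are the geometric sums $8n+2 \geq \sum (\text{cost to cover each }SDC_1\text{ edge from the pile})$ and the ``$2^k$ halving'' bookkeeping for moves travelling along paths.

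The main obstacle, as with Theorems 4 and 6, is the case analysis for the upper bound: one must be sure the list of distribution types considered is genuinely exhaustive (or reduces to the listed cases by a covering/monotonicity argument — a distribution is ``worst'' when pebbles are maximally concentrated and maximally far from $SDC_1$), and that no mixed distribution of size $8n+2$ slips through uncovered. I would organize this by a reduction lemma: it suffices to verify distributions in which every label-$0$ edge that gets pebbles gets them in a single block and each block sits at a vertex at maximal distance from the uncovered portion of $SDC_1$; then only finitely many block-placements need checking, and each reduces to a geometric-series inequality in $n$. A secondary subtlety is the parity constraint on label-$0$ edges, which must be respected both when constructing the bad distribution (so $8n$, not $8n-1$) and when arguing that an adversary cannot do better — but since covering a label-$1$ target only ever benefits from more pebbles, the parity restriction only helps the pebbler, so it does not weaken the upper bound argument. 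I expect the whole proof to mirror Theorem 5 almost line for line, with $20n+6$ replaced by $8n+2$ and the shadow-graph incidence structure of $DS(B_{n,n})$ substituted for that of $S(B_{n,n})$.
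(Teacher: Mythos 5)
This proposal is a strategy outline, not a proof: every quantitatively essential step is deferred with ``I would \dots''. The whole content of the theorem is the constant $8n+2$, and that constant is never derived. Concretely, for the lower bound the paper identifies the extremal label-$0$ edge as $vw_{2}$ and the uncovered label-$1$ edge as $uv$: from $vw_{2}$ the edge $uv$ costs $2$ and each of the $2n$ edges $uu_{r}$, $v_{r}w_{1}$ in $SDC_{1}$ costs $4$, so the total cost is $2+4n+4n=8n+2$ and $8n$ pebbles on $vw_{2}$ fail. You never identify this edge, never compute $|SDC_{1}|=2n+1$, and never perform this sum; you even describe the pile as sitting on ``the single pendant edge that is farthest'' from the target, but $DS(B_{n,n})$ has no pendant edges (the degree-splitting construction gives every vertex degree at least $2$), and in the paper's argument the uncovered edge $uv$ is in fact the \emph{closest} label-$1$ edge to the pile, not the farthest --- it is left uncovered only because the other $2n$ edges exhaust the $8n$ pebbles first. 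Without actually writing down the labeling ($g(uv)=g(uu_{r})=g(v_{r}w_{1})=1$, the rest $0$) and computing the line-graph distances from each label-$0$ edge, you cannot know whether the answer is $8n+2$ or something else, so the proposal does not establish the statement.

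For the upper bound the situation is the same: the case analysis \emph{is} the proof of that direction, and you replace it with an unproved ``reduction lemma'' asserting that only maximally concentrated block placements need checking. That reduction is plausible but is itself a nontrivial claim requiring argument (the paper instead verifies an explicit list of distributions, e.g.\ $p_{e}(vv_{r})=8n$, $p_{e}(v_{r}w_{1})=6n+3$, $p_{e}(uw_{2})=6n+2$, $p_{e}(u_{r}w_{1})=4n+4$, and several mixed ones). Finally, the definition of $\psi_{EC}$ carries a second constraint beyond the parity of pebbles on label-$0$ edges, namely that no pebbles may remain on label-$0$ edges at the end of the sequence of moves; your proposal discusses only the parity restriction and never addresses how the covering moves are to be arranged so that this terminal condition holds. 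These omissions are gaps, not stylistic shortcuts: filling them in is precisely where the theorem could succeed or fail.
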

\begin{proof}
In  SDC labeling \cite{LP2} we have the following edge labels:\\
$g(uv)=1;$\\
$g(uw_{2})=0;$\\
$g(vw_{2})=0;$\\
$g(uu_{r})=1,$  $1 \leq r \leq n$;\\
$g(u_{r}w_{1})=0,$  $1 \leq r \leq n$;\\
$g(vv_{r})=0,$  $1 \leq r \leq n$;\\
$g(v_{r}w_{1})=1,$  $1 \leq r \leq n$.\\

If $p_{e}(vw_{2})=8n$,  then we cannot cover the edge $uv$ which has label 1. In order to cover an edge $uv$  we need two more pebbles. Hence $\psi_{EC}(DS(B_{n,n}))  \geq 8n+2$.

For proving the sufficient condition we distribute $8n+2$ pebbles on $E(DS(B_{n,n}))$.
 If $p_{e}(vw_{2})=8n+2$, then we are sure to get $SDC_{1}$ cover. Let $p_{e}(vw_{2}) < 8n+2$.

$SDC_{1}$ cover is ensured for the following distributions:\\
for $ 1 \leq r \leq n$
\begin{enumerate}
\item $p_{e}(vv_{r})=8n$ 
\item $p_{e}(v_{r}w_{1})=6n+3$ 
\item $p_{e}(uw_{2})=6n+2$
\item $p_{e}(uu_{r})=6n+1$ or $p_{e}(uv)=6n+1$
\item $p_{e}(u_{r}w_{1})=4n+4$
\end{enumerate}
  Distributing 2 pebbles each on the edges $vv_{r}$  and $u_{r}w_{1},$ $(1 \leq r \leq n)$ we can cover $uu_{r}$ and $v_{r}w_{1}$ $(1 \leq r \leq n)$. So we can cover the edge $uv$ using 2 pebbles on $vw_{2}$. If $p_{e}(uw_{2})=2n+2$ and $p_{e}(u_{r}w_{1})=2n$ ($1 \leq r \leq n$) then it is easy to see $SDC_{1}$ cover. If $p_{e}(uv)=2n+1$ and $p_{e}(vv_{r})=2n$ ($1 \leq r \leq n$) then  $SDC_{1}$ cover is ensured. Place $2n+1$ pebbles on the edge $uv$ and  $2n-1$ pebbles on the edge $v_{r}w_{1}$  ($1 \leq r \leq n$) then also $SDC_{1}$ cover is reached. If $p_{e}(u_{r}w_{1})=4n$ ($1 \leq r \leq n$) and $p_{e}(vw_{2})=2$ then  $SDC_{1}$ cover is obtained.  If $p_{e}(vw_{2})=4n+2$ and $p_{e}(v_{r}w_{1})=n+1$ then it is easy to see $SDC_{1}$ cover. 

Therefore, $\psi_{EC}(DS(B_{n,n}))  = 8n+2$.

\end{proof}

\begin{theorem} 
For $K_{1,3} * K_{1,n}$, $\psi_{EC}(K_{1,3} * K_{1,n}) = \begin{cases} 9n+11 & \text{n odd} \\ 9n+4 & \text{n even}\end{cases}$.
\end{theorem}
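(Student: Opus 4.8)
The plan is to follow the same two-sided scheme used in the preceding theorems: first establish the lower bound by exhibiting a single ``bad'' distribution of $\psi_{EC}-1$ pebbles from which no $SDC_{1}$ cover can be reached, and then establish the upper bound by a case analysis over the essentially different restricted distributions of $\psi_{EC}$ pebbles. The first step is to recall from \cite{LP2} the explicit SDC edge labeling of $K_{1,3}*K_{1,n}$, identify the edges carrying label $1$ (the set $SDC_{1}$) and their count $|SDC_{1}|$, and compute, for each label-$1$ edge $e$ and each label-$0$ edge $f$, the cost $2^{d(e,f)}$ of shifting one pebble from $f$ onto $e$ (distances measured in the line graph, i.e.\ along shared endpoints). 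Since $K_{1,3}*K_{1,n}$ is built by identifying/attaching copies of $K_{1,n}$ to the leaves of $K_{1,3}$, there is a single ``far'' pendant edge whose distance to the hardest-to-reach label-$1$ edge realizes the maximum, and piling pebbles there yields the extremal distribution: for $n$ odd one checks that $9n+9$ pebbles placed on that edge leave some label-$1$ edge uncovered (needing $2$ more), giving $\psi_{EC}\ge 9n+11$, and for $n$ even the analogous count gives $9n+2$ insufficient, hence $\psi_{EC}\ge 9n+4$. The parity split is forced by $|SDC_{1}|$ and by whether the central edge of $K_{1,3}$ can be covered ``for free'' from an adjacent label-$0$ edge.

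For the upper bound I would fix an arbitrary restricted distribution $D$ of $m$ pebbles ($m=9n+11$ or $9n+4$) with the terminal condition that no pebbles remain on label-$0$ edges, and argue that an $SDC_{1}$ cover is always attainable. The natural organizing principle — exactly as in Theorems through $DS(B_{n,n})$ — is: (i) dispose of the ``all pebbles on one edge'' distributions first (placing all $m$ on the far pendant edge, or the stated threshold amount on each structurally distinct edge class, suffices); (ii) then handle the ``spread'' distributions by observing that once every pendant/label-$0$ edge has at least $2$ pebbles one can cover every incident label-$1$ edge directly, and the few remaining label-$1$ edges near the three centres and the hub of $K_{1,3}$ are covered by routing the surplus (an extra $4$ pebbles on a suitable pendant edge moves one pebble two steps to a hub edge); (iii) bound from above the total pebble expenditure of this routing scheme and check it never exceeds $m$. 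I expect the bookkeeping to break into the same enumerated sub-distributions the authors use elsewhere: $p_{e}$ equal to $m-1$, $m-2$, $\tfrac{m}{3}$-ish thresholds, etc., on each of the $O(1)$ edge-orbits (the three $K_{1,n}$-copies, their pendant edges, the $K_{1,3}$ edges, the connecting edges).

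The main obstacle is the upper-bound case analysis, not the lower bound. Because $K_{1,3}*K_{1,n}$ has four ``branch points'' (the centre of $K_{1,3}$ and the three centres of the attached stars), a pebble starting on a label-$0$ pendant edge of one star may have to travel distance $3$ or $4$ to reach a label-$1$ edge in another branch, so the worst-case routing cost is genuinely large and the constant $9n+11$ (resp.\ $9n+4$) is tight only after one verifies that no adversarial distribution simultaneously starves several far-apart label-$1$ edges. Making this rigorous requires a counting argument of the following shape: if $D$ fails to admit an $SDC_{1}$ cover, then for the ``cheapest'' uncovered label-$1$ edge $e^{*}$ every pebble in $D$ lies at line-graph distance $\ge d^{*}$ from $e^{*}$ (else we could cover $e^{*}$ and recurse), whence $m = p_{e}(D) \le (\text{number of far edges})\cdot(\text{max useless pile}) < 9n+11$, a contradiction; the parity of $n$ enters through the exact value of $d^{*}$ and the number of label-$1$ edges that must be covered from ``distance $2$''. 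I would present the argument by first stating the edge-label table as a displayed list, then the lower bound in one paragraph, then the upper bound as a sequence of ``$SDC_{1}$ cover is ensured for the following distributions'' enumerations mirroring the earlier proofs, and finally the contradiction count closing Case~1 and Case~2.
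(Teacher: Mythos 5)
Your overall scheme matches the paper's: the lower bound comes from concentrating pebbles on a label-$0$ pendant edge of the star at $w$ (the paper uses $ww_{1}$, with $ww_{2}$ as the edge left uncovered), and your figures $9n+9$ and $9n+2$ for the failing distributions agree with the paper's. One small correction to your picture: $ww_{1}$ is extremal not because a single label-$1$ edge is far from it, but because the bulk of $SDC_{1}$ (all the $uu_{r}$ and $vv_{r}$ with $r$ odd) sits at line-graph distance $3$ from it, costing $8$ pebbles apiece; summing $2\cdot\frac{n-1}{2}+4+8\cdot\frac{n+1}{2}+8\cdot\frac{n+1}{2}=9n+11$ is the computation that actually has to be displayed, and your proposal defers it. The upper bound in the paper is, as you anticipated, a list of representative distributions ("$SDC_{1}$ cover is ensured for the following distributions") followed by a few mixed placements, so your item (ii) mirrors the paper's structure.

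The genuine gap is in your item (iii), the closing "contradiction count," which is the one place you try to go beyond the paper and make the upper bound rigorous. The step "for the cheapest uncovered label-$1$ edge $e^{*}$ every pebble lies at distance $\ge d^{*}$ from $e^{*}$, else we could cover $e^{*}$ and recurse" is not valid for cover pebbling: the task is to land a pebble on \emph{all} label-$1$ edges simultaneously, so the pebbles you spend covering $e^{*}$ are no longer available for the remaining $\sim\frac{3n}{2}$ targets, and the recursion does not go through. The resulting inequality $m\le(\text{number of far edges})\cdot(\text{max useless pile})$ is not justified and in general fails. To close the upper bound honestly you would need either an exhaustive weight-function argument (assign to each label-$1$ edge the cost of reaching it from the location of each pebble and show the total available weight always suffices) or an appeal to a stacking-type theorem adapted to this restricted edge setting; neither is present in your plan, and, to be fair, neither is present in the paper's own proof, which also only verifies a finite list of special distributions.
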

\begin{proof}
In SDC labeling \cite{LP1}  we have the following pattern of SDC edge labels:\\
$g(xu)=1$;\\
$g(xv)=0$;\\
$g(xw)=0$;\\
$g(uu_{r})=\begin{cases} 1 & \text{r is odd} \\ 0 & \text{ r is even}\end{cases}$, $1 \leq r \leq n$;\\
$g(vv_{r})=\begin{cases} 1 & \text{r is odd} \\ 0 & \text{ r is even}\end{cases}$, $1 \leq r \leq n$;\\
$g(ww_{r})=\begin{cases} 0 & \text{r is odd} \\ 1 & \text{ r is even}\end{cases}$, $1 \leq r \leq n$.\\

\textbf{Case 1.} $n$ is odd.\\
If $p_{e}(ww_{1})=9n+9$,  then we cannot cover the edge $ww_{2}$ which has label 1.  Hence $\psi_{EC}(K_{1,3} * K_{1,n}) \geq 9n+11$.

For proving the sufficient condition we distribute $9n+11$ pebbles on $E(K_{1,3} * K_{1,n})$.
If $p_{e}(ww_{2r-1})=9n+11$ ($1 \leq r \leq \left\lceil \frac{n} {2} \right\rceil$) then we can easily cover the edges having label 1. Let $p_{e}(K_{1,3} * K_{1,n})<9n+11$.
$SDC_{1}$ cover is ensured for the following distributions:
\begin{enumerate}
\item $p_{e}(ww_{2r})=9n+10$ ($1 \leq r \leq \left\lfloor \frac{n} {2} \right\rfloor$)
\item $p_{e}(vv_{2r})=9n+5$  ($1 \leq r \leq \left\lfloor \frac{n} {2} \right\rfloor$) 
\item $p_{e}(vv_{2r-1})=9n+4$ ($1 \leq r \leq \left\lceil \frac{n} {2} \right\rceil$)
\item $p_{e}(uu_{2r})=9n+3$ ($1 \leq r \leq \left\lfloor \frac{n} {2} \right\rfloor$)
\item $p_{e}(uu_{2r-1})=9n+2$ ($1 \leq r \leq \left\lceil \frac{n} {2} \right\rceil$)
\item $p_{e}(xu)=5n+2$
\item $p_{e}(xv)=5n+3$
\item  $p_{e}(xw)=5n+5$
\end{enumerate}
If we place $n+1$ pebbles each on $ux$, $vx$ and $wx$ then we obtain $SDC_{1}$ cover. If we place $n+3$ pebbles on $uu_{r}$, $r$ is odd, $n+1$ pebbles on $vv_{r}$, $r$ is odd and $n-1$ pebbles on $ww_{r}$, $r$ is even then we can cover $SDC_{1}$. Placing $n+2$ pebbles  on $uu_{r}$, $r$ is even, $n$ pebbles on $vv_{r}$, $r$ is even and $n-2$ pebbles on $ww_{r}$, $r$ is odd then we reach  $SDC_{1}$ cover. 
 Let us consider the distribution of  pebbles on the edges having label 0  except the edge $xv$. If we place $n+1$ pebbles are placed on $uu_{r}$, $r$ is odd, 2 pebbles on $xw$, $n+1$ pebbles on $vv_{r}$, $r$ is odd and $n-1$ pebbles on $ww_{r}$, $r$ is even then we reach $SDC_{1}$ cover.  If we place $n$ pebbles on $uu_{r}$, $r$ is even, 2 pebbles on $xw$, $n+1$ pebbles on $vv_{r}$, $r$ is even and $n$ pebbles on $ww_{r}$, $r$ is odd,  then we are sure to get $SDC_{1}$ cover. 

Thus $\psi_{EC}(K_{1,3} * K_{1,n}) = 9n+11$.

\textbf{Case 2.} $n$ is even.\\
If $p_{e}(ww_{1})=9n+2$,  then we cannot cover an edge $ww_{2}$ which has a label 1 by SDC condition.  Hence $\psi_{EC}(K_{1,3} * K_{1,n}) \geq 9n+4$.

For proving the sufficient condition we distribute $9n+4$ pebbles on $E(K_{1,3} * K_{1,n})$.
If $p_{e}(ww_{2r-1})=9n+4$ ($1 \leq r \leq \left\lceil \frac{n} {2} \right\rceil$) or $p_{e}(vv_{2r})=9n+4$, ($1 \leq r \leq \left\lfloor \frac{n} {2} \right\rfloor$), we can easily cover $SDC_{1}$.  Let $p_{e}(K_{1,3} * K_{1,n})<9n+4$.
$SDC_{1}$ cover is ensured for the following distributions:
\begin{enumerate}
\item $p_{e}(ww_{2r})=9n+3$ ($1 \leq r \leq \left\lfloor \frac{n} {2} \right\rfloor$) or $p_{e}(vv_{2r-1})=9n+3$ ($1 \leq r \leq \left\lceil \frac{n} {2} \right\rceil$)
\item  $p_{e}(uu_{2r})=9n+2$  ($1 \leq r \leq \left\lfloor \frac{n} {2} \right\rfloor$)
\item $p_{e}(uu_{2r-1})=9n+1$ ($1 \leq r \leq \left\lceil \frac{n} {2} \right\rceil$)
\item $p_{e}(xu)=5n+1$
\item $p_{e}(xv)=5n+2$
\item  $p_{e}(xw)=5n+2$
\end{enumerate}
  If we place $n$ pebbles each on  the set $uu_{r}$, $vv_{r}$ ($r$ is odd) and $ww_{r}$ ($r$ is even)  then we can cover all the edges having label 1 except the edge $xu$. In order to cover the edge $xu$ we put 2 pebbles on  $xw$. If we place  $n-1$ pebbles each on on the set $uu_{r}$, $vv_{r}$ ($r$ is even) and $ww_{r}$, ($r$ is odd) then we are able to cover all the edges except the edge  $xu$.   But the edge $xu$ can be covered by  placing 2 pebbles on the edge $xw$. If we place $n$ pebbles each on  $vx$ and $wx$ and $n+1$ pebbles on $ux$ then we reach $SDC_{1}$ cover. If we place $n+2$ pebbles are placed on $uu_{r}$ ($r$ is odd), $n$ pebbles each on the set $vv_{r}$ ($r$ is odd) and $ww_{r}$ ($r$ is even) then we get $SDC_{1}$ cover. Placing $n+1$ pebbles  on $uu_{r}$ ($r$ is even), $n-1$ pebbles each on the set $vv_{r}$ ($r$ is even) and $ww_{r}$, $r$ is odd ensures $SDC_{1}$ cover. 
 
Thus $\psi_{EC}(K_{1,3} * K_{1,n}) = 9n+4$.
\end{proof}



\begin{thebibliography}{12}
\bibitem{FC} F. Chung: Pebbling in hypercubes, SIAM J. Discrete Mathematics 2 (1989), 467-472.
\bibitem{CCF} Crull, Cundiff, Feltman, Hulbert, Pudwell, Szaniszlo and Tuza, The cover pebbling number of graphs, Discrete Mathematics 296(1), (2005), 15-23.
\bibitem{JAG} Gallian, JA 2022, A dynamic survey of graph labeling, The Electronic J. Combin., 21, \# DS6.
\bibitem{HC} A. Haritha, J. Chithra,  Some new results on paley graphs, Advances and Applications in Mathematical Sciences, 22(8), (2023), 1765-1770.

\bibitem{HB} Hulbert: A survey of graph pebbling, Congressus Numerantium 139 (1999), 41-64.
\bibitem{LP1} A. Lourdusamy and F. Patrick,  Sum divisor cordial graphs, Proyecciones Journal of Mathematics, 35(1), (2016), 115-132.

\bibitem{LP2} A. Lourdusamy and F. Patrick, Sum divisor cordial labeling for star and ladder related graphs, Proyecciones Journal of Mathematics, 35(4), (2016), 437-455.

\bibitem{LM} A. Lourdusamy and T. Mathivanan, The domination cover pebbling number for some cyclic graphs and path graphs, Ars Combinatoria
138 (2018), 51-61.
\bibitem{PSV} L. Pachter, H.S. Snevily and B. Voxman, On pebbling graphs, Congressus Numerantium 107 (1995), 65-80.
\bibitem{AP} A. Priscilla Paul, On Edge Pebbling number and Cover Edge Pebbling number of some graphs, Journal of Information and Computational Science, 10(6), (2020), 337-344.
\bibitem{PA} A. Priscilla Paul, A. B. Anisha,  Cover Edge Pebbling number for Jahangir Graphs $J_{1,m}$, $J_{2,m}$, $J_{3,m}$ and $J_{4,m}$, Advances and Applications in Mathematical Sciences, 22(8), (2023), 1721-1728.

\end{thebibliography}
\end{document}